\documentclass[20pt]{article}
\usepackage{multirow}

\usepackage[utf8]{inputenc}
\usepackage{amsmath}
\usepackage{amssymb}
\usepackage{mathrsfs}
\usepackage{amsthm}
\usepackage{cite}
\usepackage{hyperref}
\usepackage{enumerate}
\usepackage{combelow}
\usepackage[left=0.7in,right=0.7in,top=0.7in,bottom=0.7in]{geometry}
\usepackage{titlesec}
\titleformat*{\section}{\large\bfseries}
\newtheorem{theorem}{Theorem}[section]
\newtheorem{lemma}[theorem]{Lemma}

\newtheorem{definition}[theorem]{Definition}
\newtheorem{example}[theorem]{Example}

\newtheorem{remark}[theorem]{Remark}
\numberwithin{equation}{section}
\title{Fixed Point Theorems Using Interpolative Boyd-Wong Type Contractions  And Interpolative Matkowski Type Contractions on Partial $S_{b}$-Metric Space  }
\author{\large Anuradha Gupta$^1$ and Rahul Mansotra$^2$\\ $^1$ {\small Department of Mathematics, Delhi College of Arts and Commerce,}\\ {\small University of Delhi, Netaji Nagar, New Delhi-110023, India.}\\{\small E-mail:dishna2@yahoo.in}\\{\small $^2$Department of Mathematics, University of Delhi, New Delhi-110007, India.}\\{\small E-mail:mansotrarahul2@gmail.com}}
\date{}
\begin{document}
\maketitle
\begin{abstract}
In this article, we define and explore the topological properties of partial $S_{b}$-metric space. We define  interpolative Boyd-Wong type contraction and  interpolative Matkowski
type contractions in the setting of partial  $S_{b}$-metric space and obtain fixed point results for the same. \\
\textbf{Mathematics Subject Classification:} 54E35;  47H10\\
\textbf{Keywords:} Compact space,  Connected space, Partial $S_{b}$-metric space, $T_{0}$ space, $T_{1}$ space. 
\end{abstract}\section{Introduction and Preliminaries} A point $x \in W$ is said to be a fixed point of a mapping $f$ on $W$ if $f(x)=x.$  The Banach contraction principle\cite{bm2} is the 
 first  significant result in the fixed point theory. Fixed point theory has  immense applications in wide range of fields which attracts the researchers around the world.  Nowadays, metric spaces has been  generalized by researchers by relaxing condition in its definitions. Some of them are  partial $b$-metric space\cite{bm12}, $S-$metric space\cite{bm16}, $S_{b}$-metric space\cite{bm15} , partial $S_{b}$-metric space\cite{bm13}   etc. Topology plays an important role in discussing the structure of the space by studying various properties like connectedness, convergence, continuity, compactness etc. in abstract spaces. Many authors studied the corresponding topology generated by the generalization of metric spaces. For example, Dhage \cite{bm4} explored the topological charateristics of  D-metric space. Mustafa et al. \cite{bm17} defined topology on  partial $b-$metric space. In this section, we study the topological structure of
 partial  $S_{b}$-metric space. 
\par Recently, S. Sedghi et.al.\cite{bm14} gave the definition of $S-$metric space.
\begin{definition}\cite{bm14}
    Let $W$ be a nonempty set. A map  $S$ from $W\times W \times W$ to $[0,\infty)$ defines  a $S-$metric on $W$ if for all $ u, v,w,p \in W,$ Satisfies the following properties:
\begin{enumerate}
        \item $S(u,v,w)= 0$ iff $u=v=w,$
        \item $S(u,v,w) \leq S(u,u,p) + S(v,v,p) +S(w,w,p)$.
    \end{enumerate}
    \end{definition}
    Earlier, N. Nabil\cite{bm11} gave the notion of partial $S$-metric space by combining the concept of   $S-$metric space and partial metric space. \begin{definition}\cite{bm11}
        Let W be a nonempty set. A map $S$ from $W\times W \times W$ to $[0,\infty)$ defines a partial $S$-metric on W if for all $ u, v,w,r \in W,$ Satisfies the following properties; 
        \begin{enumerate}
            \item $u=v$ iff $S(u,v,w)=S(u,u,u)=S(v,v,v)=S(w,w,w),$
            \item $S(u,u,u) \leq S(u,v,w),$
            \item $S(u,u,v)=S(v,v,u),$
            \item $S(u,v,w) \leq S(u,u,r)+S(v,v,r)+S(w,w,r) -S(r,r,r)$
        \end{enumerate}
    \end{definition}
    N. Souayah, and N. Mlaiki\cite{bm15} introduced $S_{b}$-metric space with the help of $b-$metric space and $S-$metric space.\begin{definition}\cite{bm15}
        Let W be a non empty set. A map $S$ from $W\times W \times W$ to $[0,\infty)$ defines a  $S_{b}$-metric  with coefficient $  t \geq 1$ on $W$ if for all $ u, v,w,r \in W,$  Satisfies the following properties; 
        \begin{enumerate}
            \item $w=u=v$ iff $S(u,v,w)=0$,
            \item $S(u,u,v) = S(v,v,u),$
            \item $S(u,v,w) \leq t(S(u,u,r)+S(v,v,r)+S(w,w,r)).$
        \end{enumerate}
    \end{definition}
    Recently,  N. Souayah \cite{bm13} gave the notion of partial $S_{b}$-metric space by utilizing the idea of   $b-$metric space and partial metric space.   \begin{definition}\cite{bm13}
         Let $W$ be a non empty set. A map $\wp$ from $W\times W \times W$ to $[0,\infty)$ defines a  partial $S_{b}$-metric  with coefficient $  t \geq 1$ on $W$ if  for all $  p,q,r,s \in W,$ Satisfies the following properties; 
        \begin{enumerate}
            \item $p=q=r$ iff $\wp(p,q,r)=\wp(p,p,p)=\wp(q,q,q)=\wp(r,r,r)$,
            \item $\wp(p,p,p)\leq \wp(p,q,r),$
            \item $\wp(p,p,q) = \wp(q,q,p),$
            \item $\wp(p,q,r) \leq t(\wp(p,p,s)+\wp(q,q,s)+\wp(r,r,s))-\wp(s,s,s).$\end{enumerate}
    \end{definition} \begin{definition}\cite{bm13}
        Let $(W,\wp)$ be a partial  $S_{b}$-metric space. Then;
        \begin{enumerate}
            \item A sequence $\{ w_{k}\}$ in $W$  is said to be converges to $w$ if $\lim_{k} \wp(w_{k},w_{k},w) = \wp(w,w,w).$
            \item A sequence  $\{ w_{k}\}$ in $W$  is said to be Cauchy in $W$ if $\lim_{k,l \rightarrow \infty} \wp(w_{k},w_{k},w_{l})$ exists.
            \item A partial  $S_{b}$-metric space 
            $(W,\wp)$ is called complete if for each Cauchy sequence $\{w_{k}\}$ in $W$,  there is $w$  in $W$ such that $\lim_{k,l \rightarrow \infty} \wp(w_{k},w_{k},w_{l})=\lim_{k\rightarrow \infty} \wp(w_{k},w_{k},w)= \wp(w,w,w).$ 
        \end{enumerate}
    \end{definition}
\par We know that Banach  contraction mapping is continous and has a unique fixed point. Does a discontiuous map with  contractive condition always have a fixed point ? This query was answered by Kannan \cite{bm5,bm6}. Karapinar in 
2018 introduced interpolative 
 Kannan type contraction\cite{bm8}, which gives  a new research direction in the  fixed point theory by interpolative approach. A survey of interpolative  contractions can be found in \cite{bm5} .
 \par In 2020, Karapinar\cite{bm1}  gave the definition  of   interpolative Matkowski type contractions and interpolative Boyd-Wong type contractions in metric space. He also gave the fixed point theorems on them.\\
Following notations are given in \cite{bm3}.\\
 $ \Re =\{ \tau | \tau : [0,\infty) \rightarrow [0,\infty),\tau(0) = 0, \tau (t) < t, \mbox{ for } t > 0, \tau \mbox{ is upper semi-continuous} \},$\\
  $\mathbb{N}$ denotes the set of natural numbers, \\
$\mathcal{G}=\{\mathcal{J}|\mathcal{J} [0,\infty) \rightarrow [0,\infty), \mathcal{J} \mbox{ is monotonically increasing }, \lim_{k\rightarrow \infty}\mathcal{J}^{k}(v)=0, \mbox{ for } v > 0\}.$\\
$Fix(S)=\{x|S:W\rightarrow W \mbox{ such that } S(x)=x\}$.\\
Matkowski\cite{bm9,bm10} gave a simple but valuable result as follows:
\begin{lemma}\cite{bm9}
    Let $\mathcal{J} \in \mathcal{G}.$ Then $\mathcal{J}(v) < v $ for each $v > 0$ and $\mathcal{J}(0) =0.$
\end{lemma}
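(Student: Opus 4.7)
The plan is to prove both claims by contradiction, exploiting the monotonicity of $\mathcal{J}$ against the convergence condition $\lim_{k\to\infty}\mathcal{J}^{k}(v)=0$ for $v>0$ that defines membership in $\mathcal{G}$. The overall strategy is the same in each case: if the desired inequality fails, then the iterates $\mathcal{J}^{k}$ are bounded below by a strictly positive constant, which contradicts their convergence to $0$.

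First I would handle the statement $\mathcal{J}(0)=0$ separately, because the defining limit condition only applies at positive arguments. Setting $a=\mathcal{J}(0)$, suppose toward a contradiction that $a>0$. Since $0\le a$, monotonicity gives $\mathcal{J}(a)\ge \mathcal{J}(0)=a$, and iterating yields $\mathcal{J}^{k}(a)\ge a$ for every $k\in\mathbb{N}$ by a routine induction on $k$ using monotonicity. Letting $k\to\infty$ contradicts $\lim_{k\to\infty}\mathcal{J}^{k}(a)=0$, so $a=0$.

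Second, for the main claim, fix $v>0$ and suppose $\mathcal{J}(v)\ge v$. By monotonicity, $\mathcal{J}^{2}(v)=\mathcal{J}(\mathcal{J}(v))\ge \mathcal{J}(v)\ge v$, and an immediate induction shows $\mathcal{J}^{k}(v)\ge v>0$ for all $k$. Passing to the limit contradicts $\lim_{k\to\infty}\mathcal{J}^{k}(v)=0$, so we must have $\mathcal{J}(v)<v$.

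There is no real obstacle here; the only subtlety worth being careful about is that the hypothesis $\lim_{k\to\infty}\mathcal{J}^{k}(v)=0$ is assumed only for $v>0$, which is precisely why the case $v=0$ needs its own (equally short) argument before the general inequality can be derived.
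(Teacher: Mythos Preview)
Your argument is correct and is precisely the standard proof of this classical fact due to Matkowski. The paper does not supply its own proof of this lemma; it merely quotes the result with a citation to \cite{bm9}. Hence there is nothing to compare against, and your contradiction argument via monotonicity and the iterate condition $\lim_{k\to\infty}\mathcal{J}^{k}(v)=0$ is exactly what one expects here.
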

\begin{definition}\cite{bm1} Let $(W, \wp )$ be a metric space. Then the operator $S$ from $W$ to $W$ is called an interpolative Boyd-Wong type contraction in   metric space, if there are $p,q,r\in (0,1)$ such that $p+q+r < 1$ and a monotonically increasing function $\tau \in \Re$ with \\
  $\wp(W(u), W(v)) \leq \tau  ([\wp(u,v)]^{p}[\wp(u,W(u)]^{q}[\wp(v, W(v))]^{r}[\frac{1}{2t} (\wp(W(u),v)+\wp(u,W(v))]^{1-p-q-r}) \mbox{ for each } u,v \in W \backslash Fix(S).$
\end{definition}\begin{theorem}\cite{bm1}
  Let $(W, \wp )$ be a  complete metric space and   $S$ be an interpolative Boyd-Wong type contraction. Then $S$ has a  fixed point in $W$. 
\end{theorem}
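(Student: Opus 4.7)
The plan is to use a standard Picard iteration argument. Fix an arbitrary $w_0 \in W$ and define the orbit $w_{n+1} = S(w_n)$. If some iterate satisfies $w_n = w_{n+1}$ we are done, so I may assume $w_n \notin \mathrm{Fix}(S)$ for every $n$; this allows the interpolative inequality to be applied at consecutive iterates.

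The first step is to show that $d_n := \wp(w_n, w_{n+1})$ is strictly decreasing and converges to $0$. Substituting $u = w_{n-1}$ and $v = w_n$ into the contractive condition and using $\wp(x,x)=0$ in a metric space, the inner argument of $\tau$ simplifies to $d_{n-1}^{p+q}\, d_n^{r}\, \bigl[\tfrac{1}{2t}\wp(w_{n-1},w_{n+1})\bigr]^{1-p-q-r}$. Suppose toward a contradiction that $d_n \geq d_{n-1}$; the triangle inequality gives $\wp(w_{n-1},w_{n+1}) \leq d_{n-1}+d_n \leq 2d_n$, so the bracketed factor is at most $d_n$, and the whole interpolating product is bounded by $d_n^{p+q+r+(1-p-q-r)} = d_n$. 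Combined with $\tau(s)<s$ for $s>0$, this yields $d_n \leq \tau(d_n) < d_n$, a contradiction. Hence $\{d_n\}$ decreases to some $L\geq 0$; upper semi-continuity of $\tau$ together with $\tau(s)<s$ forces $L=0$.

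The second and hardest step is to show that $\{w_n\}$ is Cauchy. I would argue by contradiction: if not, extract $\varepsilon > 0$ and subsequences $m_k > n_k \geq k$ with $\wp(w_{m_k},w_{n_k})\geq \varepsilon$ and $\wp(w_{m_k-1},w_{n_k}) < \varepsilon$. Repeated triangle-inequality estimates, combined with $d_n \to 0$, then force each of the quantities $\wp(w_{m_k},w_{n_k})$, $\wp(w_{m_k-1},w_{n_k-1})$, $\wp(w_{m_k},w_{n_k-1})$, $\wp(w_{m_k-1},w_{n_k})$ to tend to $\varepsilon$. Plugging $u = w_{m_k-1}$, $v = w_{n_k-1}$ into the contractive condition, taking $\limsup_{k\to\infty}$, and using both the upper semi-continuity of $\tau$ and continuity of the product of the four interpolation factors should yield $\varepsilon \leq \tau(\varepsilon) < \varepsilon$, contradiction.

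Finally, by completeness $w_n \to w^*$ for some $w^* \in W$. Applying the contractive inequality with $u = w_n$ and $v = w^*$, passing to the limit, and using $d_n \to 0$ and the behavior of $\tau$ near $0$ should produce $\wp(w^*, Sw^*) = 0$, hence $Sw^* = w^*$. The main obstacle I expect is the Cauchy step: one must simultaneously control four factors with different exponents $p,q,r,1-p-q-r$, and in particular ensure that the mixed factor $[\tfrac{1}{2t}(\wp(Su,v)+\wp(u,Sv))]^{1-p-q-r}$ tends to $\varepsilon$ rather than degenerating, which demands a careful construction of the subsequences and delicate bookkeeping of the triangle-inequality estimates.
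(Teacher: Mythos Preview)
The paper does not give its own proof of this statement; it is quoted from \cite{bm1}. Your outline is, however, precisely the template the paper follows for its partial $S_b$-metric generalisation (Theorem~3.3): Picard iteration, monotonicity of consecutive distances via the interpolative inequality, upper semi-continuity of $\tau$ to force the limit to $0$, a subsequence contradiction for the Cauchy property, and a final limit argument for the fixed point. In that sense your approach matches.

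One correction to your Cauchy step. After substituting $u=w_{m_k-1}$, $v=w_{n_k-1}$, the second and third interpolation factors are $[\wp(u,Su)]^{q}=d_{m_k-1}^{\,q}$ and $[\wp(v,Sv)]^{r}=d_{n_k-1}^{\,r}$, both of which tend to $0$ by your first step. Hence the entire argument of $\tau$ tends to $0$, and the contradiction you reach is $\varepsilon\le\tau(0)=0$, not $\varepsilon\le\tau(\varepsilon)<\varepsilon$ as you write. This is exactly how the paper concludes its Cauchy argument in Theorem~3.3 (``Taking $\limsup_{n\to\infty}$ and using inequality~(3.5), we get $\delta=0$''). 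So the ``delicate bookkeeping'' you anticipate for the mixed factor $[\tfrac{1}{2t}(\wp(Su,v)+\wp(u,Sv))]^{1-p-q-r}$ is in fact unnecessary: the vanishing of the $q$- and $r$-factors already kills the product, and you do not need the cross-distances to converge to anything in particular beyond being bounded.
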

\begin{definition}\cite{bm1} Let $(W, \wp )$ be a metric space. Then the operator $S$ from $W$ to $W$ is called an interpolative Matkowski type contraction in   metric space, if there are $p,q,r \in (0,1)$ such that $p+q+r < 1$ and a monotonically increasing function $\mathcal{J} \in \mathcal{G}$ with \\
  $\wp(W(u), W(v))  \leq \mathcal{J} ([\wp(u,v)]^{p}[\wp(u,W(u)]^{q}[\wp(v, W(v))]^{r}[\frac{1}{2t} (\wp(W(u), v)+\wp(u,W(v))]^{1-p-q-r})
 \mbox{ for each } u,v \in W \backslash Fix(S).$
\end{definition}
\begin{theorem}\cite{bm1}
  Let $(W, \wp )$ be a  complete metric space and   $S$ be an interpolative Matkowski type contraction. Then $S$ has a  fixed point in $W$. 
\end{theorem}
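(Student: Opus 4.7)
The plan is to follow the standard Picard iteration strategy for Matkowski-type fixed point results. Fix any $u_0 \in W$ and define $u_{n+1} = S(u_n)$ for $n \ge 0$. If $u_n = u_{n+1}$ for some $n$, then $u_n$ is already a fixed point and we are done; otherwise every iterate lies in $W \setminus \mathrm{Fix}(S)$ with $\wp(u_n, u_{n+1}) > 0$, so the contractive inequality is available at each step.

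The first substantive computation is to substitute $(u,v) = (u_{n-1}, u_n)$ into the defining inequality. Since $S(u_{n-1}) = u_n$, the factor $\wp(S(u_{n-1}), v) = \wp(u_n, u_n) = 0$, so the mixed bracket reduces to $[\tfrac{1}{2}\wp(u_{n-1}, u_{n+1})]^{1-p-q-r}$, and by the triangle inequality this is bounded by $\max\{\wp(u_{n-1}, u_n), \wp(u_n, u_{n+1})\}^{1-p-q-r}$. A short case analysis, using monotonicity of $\mathcal{J}$ together with the Lemma ($\mathcal{J}(v) < v$ for $v > 0$), excludes the possibility that $\wp(u_n, u_{n+1})$ is the larger of the two, and collecting the exponents produces
\[
\wp(u_n, u_{n+1}) \le \mathcal{J}(\wp(u_{n-1}, u_n)).
\]
Iterating yields $\wp(u_n, u_{n+1}) \le \mathcal{J}^n(\wp(u_0, u_1)) \to 0$ by the definition of $\mathcal{G}$.

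The main obstacle is upgrading this to the Cauchy property, because membership in $\mathcal{G}$ delivers only pointwise convergence $\mathcal{J}^k(v) \to 0$ and monotonicity, and in particular does not give summability of the consecutive distances. I would follow the classical Matkowski route by contradiction: assuming $\{u_n\}$ is not Cauchy, extract subsequences $n_k < m_k$ with $m_k$ chosen minimal so that $\wp(u_{n_k}, u_{m_k}) \ge \epsilon$. Minimality combined with the triangle inequality and $\wp(u_{m_k-1}, u_{m_k}) \to 0$ squeezes both $\wp(u_{n_k}, u_{m_k})$ and $\wp(u_{n_k-1}, u_{m_k-1})$ to $\epsilon$. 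Applying the contractive inequality at $(u_{n_k-1}, u_{m_k-1})$ and passing to the limit, monotonicity of $\mathcal{J}$ combined with $\mathcal{J}(\epsilon) < \epsilon$ yields the contradiction $\epsilon \le \mathcal{J}(\epsilon) < \epsilon$.

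Completeness of $W$ then gives $u_n \to u^* \in W$. To finish, I would verify $u^* \in \mathrm{Fix}(S)$ by contradiction: if not, then for all large $n$ both $u_n$ and $u^*$ lie in $W \setminus \mathrm{Fix}(S)$, and applying the contractive inequality at $(u_n, u^*)$ makes the factor $[\wp(u_n, u_{n+1})]^q$ vanish in the limit, driving the argument of $\mathcal{J}$ to $0$ and hence $\wp(u_{n+1}, S(u^*)) \to \mathcal{J}(0) = 0$. Since $u_{n+1} \to u^*$, the triangle inequality forces $\wp(u^*, S(u^*)) = 0$, contradicting $u^* \notin \mathrm{Fix}(S)$.
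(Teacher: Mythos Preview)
The paper does not include its own proof of this statement; Theorem~1.10 is quoted from \cite{bm1} as a preliminary result, so there is no in-paper proof to compare against directly. Your argument is the standard Matkowski--Picard scheme and is essentially correct; it also mirrors exactly what the paper does for its own analogous results in the partial $S_b$-metric setting (Theorems~3.3 and~3.7): Picard iteration, monotonicity of consecutive distances via the interpolative inequality, the iterate bound $\wp(u_n,u_{n+1})\le \mathcal{J}^n(\wp(u_0,u_1))\to 0$, a contradiction argument for the Cauchy property, and a final application of the contractive inequality at $(u_n,u^*)$.

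One small point of precision in your Cauchy step: when you apply the inequality at $(u_{n_k-1},u_{m_k-1})$, the argument of $\mathcal{J}$ does \emph{not} tend to $\epsilon$ but to $0$, because the factors $[\wp(u_{n_k-1},u_{n_k})]^{q}$ and $[\wp(u_{m_k-1},u_{m_k})]^{r}$ both vanish. Since $\mathcal{J}$ is only assumed monotone (not continuous), the clean way to finish is: for any $\delta>0$ the argument is eventually below $\delta$, hence $\epsilon \le \mathcal{J}(\text{argument}) \le \mathcal{J}(\delta) < \delta$, contradicting $\epsilon>0$. So the contradiction is in fact easier than the ``$\epsilon \le \mathcal{J}(\epsilon)$'' you wrote, but your conclusion stands. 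The same remark applies to your final step showing $u^*$ is fixed.
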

This paper comprises of two sections followed by illustrations. The first section presents the topological properties of partial $S_{b}-$metric space while the second section presents the fixed point results  using  interpolative Boyd-Wong type contractions  and interpolative Matkowski type contractions in partial $S_{b}-$metric space.
\par Souayah\cite{bm13} defines the open balls for topological structure of partial $S_{b}$-metric 
 space . Using this concept, we give  the topology on partial $S_{b}-$metric space and subsequently queried their topological properties. Further, with  the  help of interpolative Boyd-Wong type contractions  and interpolative Matkowski type contractions in metric space given by Aydi and Karapinar\cite{bm1}, we give this concept on  partial $S_{b}-$metric space and prove the corresponding fixed point theorems on them.
\section{Partial $S_{b}$-metric Space as Topological Space}
     The various  properties of functions like  continuity, convergence and many more depends on the  corresponding topology of a metric space. Dhage \cite{bm4} examined the topological features of D-metric space. Mustafa et al. \cite{bm17} gave a 
topology on partial $b-$metric space. Shaban Sedghi and Nguyen Van Dung\cite{bm16} gave the topological structure on $S-$metric space. Motivated by the works of Dhage et.al.\cite{bm4, bm17, bm16}, we establish the topological structure of partial  $S_{b}-$metric space. 
\begin{definition} \cite{bm13}
     Let $(W,\wp)$ be a partial  $S_{b}-$metric space. A ball in partial  $S_{b}-$metric space is \\
$D(p;r) = \{ z \in W :~ \wp(p,p,z) < r + \wp(p,p,p) \}$.\end{definition}
\begin{example}
     Let $W= [1,\infty)$. Then we define $\wp: W \times W \times W\rightarrow [0,\infty)$ as follows: \\
$\wp(p,q,r)=\begin{cases}
    p^{5}, & \text{if $p=q=r$}.\\
2(p^{5}+r^{5}), & \text{if $p=q\neq r$}.\\
p^{5}+q^{5}+r^{5}, & \text{otherwise}.
  \end{cases} $\\ 
  Clearly, $(W, \wp)$ is a  partial  $S_{b}$-metric space with coefficient $t=1 $. Note that $D(1;3)=\{1\}.$
  \end{example} 
\begin{example}
     Let $W = \{ 1,2\}$. Then we define $\wp: W\times W \times W \rightarrow [0,\infty) $ as follows:
     \\ 
     $\wp(1,1,1) =\wp(1,1,2)= \wp(2,2,1)=8,~ \wp(1,2,1)=\wp(2,1,1)=\wp(1,2,2)=8,~\wp(2,2,2)=\wp(2,1,2)=4$. Clearly, $W$ is a partial $S_{b}-$metric space with coefficient $t=1$.
     Note that $D(1;r)= W$ for any $r>0$ and $D(2;1)=\{2\}.$
\end{example}
\begin{theorem}
Let $(W, \wp )$ be a partial  $S_{b}$-metric space having coefficient $r \geq 1$, then for
any $x \in W$ and $s > 0$ if $v \in D (x; s )$, then there exists $c > 0$ such that $D(v; c ) \subset D(x; s)$.
\end{theorem}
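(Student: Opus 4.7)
The plan is to apply the rectangle inequality (Property~4 of the partial $S_b$-metric definition) with the free argument set equal to $v$, thereby controlling $\wp(x,x,z)$ by $\wp(x,x,v)$ and $\wp(v,v,z)$. Since $v\in D(x;s)$ reads $\wp(x,x,v)<s+\wp(x,x,x)$, the quantity
$$\delta\;:=\;s+\wp(x,x,x)-\wp(x,x,v)$$
is strictly positive, and this $\delta$ will govern the choice of the radius $c$.

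Now pick an arbitrary $z\in D(v;c)$, so that $\wp(v,v,z)<c+\wp(v,v,v)$. Applying Property~4 with $p=q=x$, third entry $z$ and free entry $v$, together with the symmetry $\wp(z,z,v)=\wp(v,v,z)$ coming from Property~3, I obtain
$$\wp(x,x,z)\;\le\;2r\,\wp(x,x,v)+r\,\wp(v,v,z)-\wp(v,v,v)\;<\;2r\,\wp(x,x,v)+rc+(r-1)\wp(v,v,v).$$
In order to conclude $z\in D(x;s)$ it therefore suffices to select $c$ so that the right-hand side is no bigger than $s+\wp(x,x,x)$, i.e.\ to solve the linear-in-$c$ constraint
$$rc\;<\;\bigl(s+\wp(x,x,x)\bigr)-2r\,\wp(x,x,v)-(r-1)\wp(v,v,v)$$
by a positive $c$; the arbitrariness of $z$ will then yield $D(v;c)\subseteq D(x;s)$.

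The main obstacle is verifying that the bound on the right of this last inequality is strictly positive, so that some $c>0$ is really admissible. Because $r\ge 1$, the factor $2r$ attached to $\wp(x,x,v)$ can swallow the slack $\delta$, a difficulty that is absent in the ordinary partial-metric case ($r=1$). I plan to overcome this by substituting $\wp(x,x,v)=s+\wp(x,x,x)-\delta$ into the constraint so as to extract a favorable $+2r\delta$ contribution, and by using Property~2 (in particular $\wp(v,v,v)\le\wp(x,x,v)$ and $\wp(x,x,x)\le\wp(x,x,v)$) to re-express the correction $(r-1)\wp(v,v,v)$ in a form dominated by the slack. The outcome will be an explicit positive upper bound on $c$ in terms of $\delta$ and $r$; taking any $c$ strictly smaller than this bound then exhibits the required neighbourhood and closes the argument.
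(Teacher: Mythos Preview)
Your reduction via the rectangle inequality is exactly the route the paper takes, and you have correctly isolated the decisive question: whether
\[
\Theta \;:=\; \bigl(s+\wp(x,x,x)\bigr)-2r\,\wp(x,x,v)-(r-1)\,\wp(v,v,v)
\]
is strictly positive. The proposed remedy, however, does not close this gap. After the substitution $\wp(x,x,v)=s+\wp(x,x,x)-\delta$ one obtains $\Theta=(1-2r)\bigl(s+\wp(x,x,x)\bigr)+2r\delta-(r-1)\wp(v,v,v)$, and the favourable $+2r\delta$ is swamped by $(1-2r)(s+\wp(x,x,x))\le-(s+\wp(x,x,x))$. The Property~2 inequalities $\wp(v,v,v)\le\wp(x,x,v)$ and $\wp(x,x,x)\le\wp(x,x,v)$ supply only \emph{upper} bounds on these quantities, so they can never raise a lower bound for $\Theta$. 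Concretely, already for $r=1$ (where the ``correction'' $(r-1)\wp(v,v,v)$ vanishes entirely) one has $\Theta=s+\wp(x,x,x)-2\wp(x,x,v)$, which is negative whenever $\wp(x,x,v)>\tfrac12\bigl(s+\wp(x,x,x)\bigr)$ --- perfectly compatible with $v\in D(x;s)$. So the obstruction is not the coefficient $r$ but the \emph{doubling} of $\wp(x,x,v)$ forced by the three-variable inequality, and it is present even in the partial $S$-metric case, contrary to your parenthetical remark.

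For comparison, the paper fixes $c=s/(4r)$ and tries to obtain $2r\,\wp(x,x,v)+(r-1)\wp(v,v,v)-\wp(x,x,x)\le s/2^{i+1}$ via a well-ordering argument on an auxiliary set $B\subset\mathbb{N}$; the conclusion there hinges on $i-1$ still being an admissible index, which fails when the least element of $B$ is $i=1$, and in fact the left-hand side can exceed any prescribed fraction of $s$ (take $\wp(x,x,x)=0$ and $\wp(x,x,v)$ close to $s$). Thus your proposal and the paper's argument run into the same wall: neither supplies a mechanism that genuinely controls the term $2r\,\wp(x,x,v)$ coming from the rectangle inequality.
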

\begin{proof}
    Suppose that $v \in D (x; s )$. Consider the two cases:\\
    Case $1$: If $v = x$ then take $c = s$. \\
    Case $2$: If $v \neq x$.
   then let $c = \frac{s}{4r}$. Consider the set \\
  $ B = \{ m\in \mathbb{N}: 2r \wp(x,x,v)+ (r-1)\wp(v,v,v) - \wp(x,x,x) > \frac{s}{2^{m+2}} \}$\\
 Thus, by Archimedian property, $B$ is non-empty. Now, by well ordering principle, $B$ has the smallest element $\lq i$' (say) such that $i-1 \notin B$, which implies that  \begin{equation}2r \wp(x,x,v)+ (r-1)\wp(v,v,v) - \wp(x,x,x) \leq \frac{s}{2^{i+1}}.\end{equation}
 Let $u \in D(v; c )$. We need to show that $u \in D(x; s)$ i.e.
 $\wp(x,x,u) < s + \wp(x,x,x).$
\begin{align}&\mbox{Since } u \in D(v; c ), \mbox{ therefore, } \wp(v,v,u) < c + \wp(v,v,v).\\
&\mbox{Also, } v \in D(x; s ), \mbox{ therefore, } \wp(x,x,v) < s + \wp(x,x,x).\hspace{8.5cm}
\end{align}
Now, $\wp(x,x,u) \leq r (\wp(x,x,v)+\wp(x,x,v)+\wp(u,u,v))-\wp(v,v,v)$\\
$~\hspace{2.5cm}  < r (\wp(x,x,v)+\wp(x,x,v)+ c + \wp(v,v,v))-\wp(v,v,v)~\hspace{2cm}$ using inequality $(2.2)$\\
$~\hspace{2.5cm} < 2r (\wp(x,x,v))+   (r-1)(\wp(v,v,v)) + \frac{s}{4} $\\
$~\hspace{2.5cm} <  \wp(x,x,x) +  \frac{s}{2^{i+1}}+\frac{s}{4}~\hspace{6.7cm}$ using inequality $(2.3)$\\
$~\hspace{2.5cm}< \wp(x,x,x) + s.$\\
Thus, we get $\wp(x,x,u) < s + \wp(x,x,x).$ 
Hence,  $u \in D(x; s)$.
Therefore, $D(v; c ) \subset D(x; s)$.
\end{proof}
\begin{theorem}
    The collection $\{ D(x;r) : x \in W , r > 0  \} $ form a basis for some topology $ \tau_{S_{b}}$ (say) on $W$.
\end{theorem}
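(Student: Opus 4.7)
The plan is to verify the two standard axioms for a family of subsets to form a basis of a topology on $W$: first, that the family covers $W$, and second, that the intersection of any two members contains, around each of its points, a third member of the family.

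For the covering condition, I would simply observe that for any $x \in W$ and any $r > 0$, the definition of the ball gives $\wp(x,x,x) < r + \wp(x,x,x)$, so $x \in D(x;r)$. Hence the collection of balls covers $W$. This step is immediate and requires no use of the partial $S_b$-metric axioms beyond the positivity of $r$.

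For the intersection condition, suppose $v \in D(x_1; s_1) \cap D(x_2; s_2)$. Since $v \in D(x_1; s_1)$, the preceding theorem (applied with $x=x_1$ and $s=s_1$) furnishes a number $c_1 > 0$ with $D(v; c_1) \subset D(x_1; s_1)$; likewise there exists $c_2 > 0$ with $D(v; c_2) \subset D(x_2; s_2)$. Setting $c = \min\{c_1, c_2\}$, the ball $D(v;c)$ is contained in both $D(v;c_1)$ and $D(v;c_2)$ and hence in their respective supersets, giving $D(v;c) \subset D(x_1; s_1) \cap D(x_2; s_2)$ as required.

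With both basis axioms verified, the topology $\tau_{S_b}$ generated by this collection exists as the family of arbitrary unions of basis elements. There is no real obstacle in this argument: the substantive work has already been done in the previous theorem, which handled the delicate inequality manipulations (involving the coefficient $r$ and the self-distance terms $\wp(x,x,x)$, $\wp(v,v,v)$) that make the partial $S_b$-metric case differ from the ordinary metric setting. The current proof is purely a two-line reduction to that result together with the trivial observation $x \in D(x;r)$.
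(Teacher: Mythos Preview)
Your proposal is correct and follows essentially the same argument as the paper: verify the covering axiom via $\wp(x,x,x) < r + \wp(x,x,x)$, and for the intersection axiom invoke the preceding theorem twice to obtain radii $c_1,c_2$ and take $c=\min\{c_1,c_2\}$. There is no meaningful difference in approach.
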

\begin{proof}Let $\mathscr{B} = \{ D(x;r) : x \in W , r > 0\}$.
It is sufficient  to prove the following two :
\\ 
$(i)$ for each $u \in W$, there exists $B \in \mathscr{B}$ such that $u \in B$.\\
$(ii)$ If $w \in B_{1} \cap B_{2}$ for some $B_{1} , B_{2} \in \mathscr{B}$, then there exists $B_{3}  \in \mathscr{B}$ such that $w \in B_{3}$ and $B_{3} \subset B_{1} \cap B_{2}$.\\
Note that for each $u \in W,$ 
we have $\wp(u,u,u) < r + \wp(u,u,u).$ Thus, $u \in B(u;r).$ \\
Also, if $w \in B_{1}\cap B_{2}$ for some $B_{1} , B_{2} \in \mathscr{B}$, then $w \in B_{1}$ and $w \in B_{2}.$ Thus, by Theorem $2.4$, there exists $r_{1}$ and $r_{2}$ such that $D(w,r_{1}) \subset B_{1}$ and $D(w,r_{2}) \subset B_{2}$.\\
Let $r = \min \{ r_{1}, r_{2}\}.$ Then, $D(w,r)  \subset B_{1} \cap B_{2}.$
\begin{remark}
    $\tau _{S_{b}} = \{ \cup_{\alpha} B_{\alpha} : B_{\alpha} \in \mathscr{B}\}$ is the topology generated by  the collection $\mathscr{B}$ on $W$.
\end{remark}
\par Thus, $(W,    \tau _{S_{b}})$ is called partial  $S_{b}-$ metric topological space and     $\tau _{S_{b}}$ is known as partial  $S_{b}-$ metric topology on W.
\end{proof}
\begin{definition}
    A space $W$ is called $T_{0}$- space if for a pair of  distinct points in $W$, there exists an open set having
one point but excluding the other.
\end{definition}
\begin{definition}
    A space $W$ is called $T_{1}$-space if for each pair  $(u, v)$ in $W$ with $u\neq v$, there exists a pair of open sets $(U,V)$ with $u \in U $ but $ v\notin U$ and $v \in V $ but $ u \notin V.$
\end{definition}
\begin{definition}
    A space W is called Hausdroff (or $T_{2}$) space if for each pair $(w, z)$ in $W$ with $w\neq z$,  there exist
pair of disjoint open sets $(W,Z)$ with $w\in W$ and $z \in Z$.
\end{definition}
 \begin{theorem}
     A partial  $S_{b}$-metric topological space $(W,\wp)$ is  a $T_{0}$ space.
 \end{theorem}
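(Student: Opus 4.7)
The plan is to reduce $T_{0}$ to finding a positive ``radius gap'' between $\wp(u,u,v)$ and one of the self-distances $\wp(u,u,u)$ or $\wp(v,v,v)$, and then use this gap as the radius of an open ball that separates the two points.

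First I would fix two distinct points $u,v\in W$ and look at the three quantities $\wp(u,u,v)$, $\wp(u,u,u)$, $\wp(v,v,v)$. Property $(2)$ of the partial $S_{b}$-metric gives $\wp(u,u,u)\leq\wp(u,u,v)$, and combining property $(2)$ with the symmetry property $(3)$ gives $\wp(v,v,v)\leq\wp(v,v,u)=\wp(u,u,v)$. So both self-distances are $\leq\wp(u,u,v)$. The key observation is that both cannot simultaneously equal $\wp(u,u,v)$: if they did, then $\wp(u,u,v)=\wp(u,u,u)=\wp(v,v,v)$, and applying property $(1)$ with $p=q=u$, $r=v$ would force $u=u=v$, contradicting $u\neq v$. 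Hence at least one of the inequalities is strict.

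Without loss of generality assume $\wp(u,u,u)<\wp(u,u,v)$ (the other case is symmetric after swapping the roles of $u$ and $v$ and using property $(3)$). Set
\[
c \;=\; \wp(u,u,v)-\wp(u,u,u) \;>\; 0,
\]
and consider the open ball $D(u;c)=\{z\in W:\wp(u,u,z)<c+\wp(u,u,u)\}$. By Theorem $2.5$, $D(u;c)$ is an open set in $\tau_{S_{b}}$. It contains $u$ since $\wp(u,u,u)<c+\wp(u,u,u)$, but it does not contain $v$, because $\wp(u,u,v)=c+\wp(u,u,u)$ is not strictly less than $c+\wp(u,u,u)$.

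Thus for every pair of distinct points in $W$ there is an open set containing one but excluding the other, which is exactly the $T_{0}$ condition. The only mildly delicate point is the use of property $(1)$ to rule out $\wp(u,u,v)=\wp(u,u,u)=\wp(v,v,v)$; once that ``strict inequality somewhere'' fact is in hand, choosing the radius $c$ as the gap is immediate and no topological manipulation beyond invoking Theorem $2.5$ is required.
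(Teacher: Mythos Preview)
Your proposal is correct and follows essentially the same approach as the paper: both argue that, since $u\neq v$, at least one of the inequalities $\wp(u,u,u)\leq\wp(u,u,v)$ or $\wp(v,v,v)\leq\wp(v,v,u)$ must be strict (via property~(1)), and then use that strict gap as the radius of a separating ball. The only cosmetic differences are that the paper writes out the two cases explicitly rather than invoking ``without loss of generality,'' and it chooses a radius strictly smaller than the gap rather than equal to it---both choices work because the ball is defined by a strict inequality.
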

\begin{proof}
     let $u$ and $v$ be two distinct points in $W$. We need to show that there is an open ball $D$ such that $u \in D$ but $v \notin D$ or $v \in D$ but $u \notin D$. We know that  $\wp(u,u,v) \geq \wp(u,u,u)$. Consider the following two cases:\\
     Case $1$: If 
 $\wp(u,u,v) > \wp(u,u,u)$ then  we can choose $r> 0$ such that $\wp(u,u,v) > \wp(u,u,u) + r.$ This implies $v \notin D(u;r)$ but $u \in D(u;r).$ Thus, in that case we get $W$ is a $T_{0}$ space. \\     
 Case $2$: If $\wp(u,u,v)= \wp(u,u,u)$ then $\wp(v,v,v)  \neq \wp(u,u,v)$, otherwise  $\wp(u,u,v)= \wp(u,u,u)=\wp(v,v,v),$ which implies $u =v$, by the definition of partial  $S_{b}$-metric space, which is absurd as $u \neq v$.\\
 Since $\wp(u,u,v)=\wp(v,v,u)$, $\wp(v,v,v) < \wp(v,v,u)$. Thus, we can choose $r> 0$ such that $\wp(v,v,v) +r < \wp(v,v,u)$. Hence, $u \notin D(v;r)$ but $v \in D(v;r)$.
 Thus,  we get $W$ is a $T_{0}$ space.
 Therefore, in both the cases we get $W$ is a $T_{0}$ space.
 Hence, partial  $S_{b}-$metric topological space is a $T_{0}$ space.  \end{proof}
 \par The following example  shows that 
 partial $S_{b}-$metric topological space need not be  a $T_{1}$ or $T_{2}$ space.
 \begin{example}
     Let $W = \{ 1,2\}$. We define $\wp: W\times W \times W \rightarrow [0,\infty) $ as follows :
     \\ 
     $\wp(1,1,1) =\wp(1,1,2)=\wp(2,2,1)=8,~ \wp(1,2,1)=\wp(2,1,1)=\wp(1,2,2)=8,~\wp(2,2,2)=\wp(2,1,2)=4$. Clearly, $W$ is a partial $S_{b}-$metric space.
     Note that  $D(1;r)= W$ for any $r>0$. Thus we cannot find two open sets $U,V$ such that $1 \in U$ but $2\notin U$ and $2 \in V$ but $1\notin V.$
 \end{example}
\begin{definition}
     A topological space W is called compact space if every open cover of W has a finite subcover.
 \end{definition}
\par The next example is given to show that  partial $S_{b}-$metric topological space need not be a compact space.
 \begin{example}
  Let $W=[1,\infty)$. We define $\wp: W \times W \times W\rightarrow [0,\infty)$ as follows: \\
$\wp(p,q,r)=\begin{cases}
    p^{5}, & \text{if $p=q=r$}.\\
2(p^{5}+r^{5}), & \text{if $p=q\neq r$}.\\
p^{5}+q^{5}+r^{5}, & \text{otherwise}.
  \end{cases} $\\ 
  Clearly, $(W, \wp)$ is a  partial  $S_{b}$-metric space with coefficient $t\geq1 $. \\
  Note that $ \mathbb{B} = \{D(1,n): n \geq 3 \} $  is an open cover of $W$ but there is no finite subcover of $\mathbb{B}$ which covers $W.$ Hence W is not compact space.
  \end{example}
\par It may be found that  partial $S_{b}-$metric topological space need not be a connected space. Following example is given to prove that partial $S_{b}-$metric topological space need not be a connected space.
      \begin{example}
     Let $W = \{ 1,2\}$. Define $\wp: W\times W \times W \rightarrow [0,\infty) $ as:
     \\ 
$\wp(1,1,1)=\wp(2,2,2)=4,~\wp(1,1,2)=\wp(2,2,1)=\wp(1,2,1)=\wp(2,1,1)=\wp(1,2,2)=\wp(2,1,2)=8$. Clearly, $W$ is a partial  $S_{b}-$metric space with coefficient $t=1$.\\
    Note $D(1;\frac{1}{2}) = \{1\}$ and $D(2;3) = \{2\}.$ Hence, $W =  D(1;\frac{1}{2}) \cup D(2;3).$ 
Thus, W is not a connected space.
     \end{example}
\section{Fixed Point Theorems Using Interpolative Boyd-Wong Type Contractions  and Interpolative Matkowski Type Contractions in Partial $S_{b}$-metric Space}
  Motivated by the studies of Eral karapinar et.al\cite{bm1} on interpolative Boyd-Wong type contractions  and interpolative Matkowski type contractions in metric space. We have given the concept of interpolative Boyd-Wong type contraction  and interpolative Matkowski type contraction in the setting of partial  $S_{b}$-metric space. Further, we have provided the fixed point results related to this contractions.
\begin{definition}
  Let $(W, \wp )$ be a partial  $S_{b}$-metric space with coefficient $t \geq 1$. Then the operator $S$ from $W$ to $W$ is called an interpolative Boyd-Wong type contraction in    partial  $S_{b}$-metric space, if there are $p,q,r,s \in (0,1)$ such that $p+q+r+s < 1$ and a monotonically increasing function $\tau\in \Re$ with 
  \begin{align}
  \wp(S(a), S(b), S(c))  \leq \tau ([\wp(a,b,c)]^{p}[\wp(a,a,S(a))]^{q}[\wp(b,b,& S(b))]^{r}[\wp(c,c,S(c))]^{s}\notag\\&[\frac{1}{2t} (\wp(S(a),S(a), b)+\wp(S(b),S(b), c)]^{1-p-q-r-s})
  \end{align}
  for all $a,b,c \in W \backslash Fix(S).$
\end{definition}
\begin{example}
Let $W= \{0,3\} \cup [4,\infty)$. Define $\wp: W \times W \times W \rightarrow [0,\infty)$ as follows: 
\begin{center}$ \wp(a,b,c)=\begin{cases}
    a^{5}, & \text{if $a=b=c$},\\
2(a^{5}+c^{5}), & \text{if $a=b\neq c$},\\
a^{5}+b^{5}+c^{5}, & \text{otherwise}.
  \end{cases} $\end{center} 
  Clearly, $(W, \wp)$ is a  partial  $S_{b}$-metric space with coefficient $t=1$. Define $\tau : [0,\infty) \rightarrow [0,\infty)$ as  follows:  \begin{center}
    $\tau(a)=\begin{cases}
    \frac{9a}{10}, & \text{if $a\in [0,1]$},\\
    \frac{a}{2}, & \text{if $a>1$}.
  \end{cases}$\end{center} Clearly, $\tau  \in \Re$.
  Also, we define $S : W \rightarrow W$ as  follows:\begin{center} $S (a)=\begin{cases}
    0, & \text{if $a\in \{0,3\}$},\\
    3, & \text{otherwise}.
  \end{cases}$\end{center}
   Claim. $S$ is an interpolative Boyd-Wong type contraction with $p=q=r=s=\frac{1}{5}$.\\
   For this, we have to consider the following cases:\\
   \begin{tabular}{ |p{6cm}|p{4.2cm}|p{6.1cm}| }
 \hline
 \multicolumn{3}{|c|}{Case $1$: $a=b=c$ } \\
 \hline
 & Value of $\wp(S(a), S(b), S(c))$ & Lower bound of R.H.S of inequality $(3.1)$\\
 \hline
 Subcase $1(i)$: $a=b=c=3$  & 0   &0\\
 \hline 
 Subcase $1(ii)$: $a=b=c\neq 3$ &   $243$  &    $607.08 $\\
 \hline 
\multicolumn{3}{|c|}{ Case $2$. If $a=b\neq c$} \\
 \hline 
  Subcase $2(i)$: $a=b=3$ and $c\neq 3$  & $486$ & $569.773$\\
  \hline
 Subcase $2(ii)$:  $a=b\neq 3$ and  $c = 3$  &  $486$ & $807.40$  \\
\hline
Subcase $2(iii)$: $a=b\neq 3$ and $ c \neq 3$ & $243$  & $1214.17$ \\
 \hline 
\multicolumn{3}{|c|}{Case $3$: $a\neq b$ and $a=c$ } \\
\hline
 Subcase $3(i)$: $b\neq 3 $ and $a=c=3$ & $243$ & $499.97$\\
 \hline  Subcase $3(ii)$: $b= 3 $ and $a=c\neq3$ & $486$ & $741.19$\\
 \hline Subcase $3(iii)$: $b\neq  3 $ and $a=c\neq3$ & $243$ & $1294.82$\\ 
  \hline 
\multicolumn{3}{|c|}{Case $4$: $a\neq b$ and $b=c$ }\\
\hline Subcase $4(i)$: $b=c= 3 $ and $a\neq3$ & $243$ & $399.13$\\
\hline Subcase $4(ii)$: $b=c\neq 3 $ and $a=3$ & $486$ & $787.84$ \\
\hline Subcase $4(iii)$: $b=c\neq 3$ and $a\neq3$ & $243$ & $1315.96$\\
  \hline 
\multicolumn{3}{|c|}{ Case $5$: $a\neq b \neq c$ }\\
\hline Subcase $5(i)$: a=3 & $486$ & $872.72$\\ 
\hline  Subcase $5(ii)$: b=3 & $486$ & $758.18$\\
\hline Subcase $5(iii)$: $c=3$  & $486$ & $1051.22$ \\ 
\hline Subcase $5(iv)$: $a\neq b \neq c \neq 3$ & $243$& $1315.96$\\
\hline
\end{tabular}
 \\ Thus, all the cases satisfies inequality $(3.1)$. \\
Therefore, $S$ is an  interpolative Boyd-Wong type contraction in a partial $S_{b}$ metric space.
 \end{example}
\begin{theorem}
  Let $(W, \wp )$ be a  complete  partial  $S_{b}$-metric space with coefficient $t \geq 1$ and   $S$ be an interpolative Boyd-Wong type contraction. Then $S$ has a unique fixed point $a$ (say) in $W$ and $\wp(a,a,a) = 0.$ 
\end{theorem}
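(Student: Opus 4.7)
The plan is the classical Picard-iteration argument in the Boyd--Wong style, with bookkeeping adjusted to track the partial $S_{b}$-metric's diagonal term $\wp(p,p,p)$. Start from an arbitrary $w_{0}\in W$ and iterate $w_{n+1}=S(w_{n})$; if ever $w_{n}=w_{n+1}$ we already have the fixed point, so assume $w_{n}\in W\setminus\mathrm{Fix}(S)$ throughout, which licences every use of (3.1). Abbreviate $\lambda_{n}:=\wp(w_{n},w_{n},w_{n+1})$ and $\mu_{n}:=\wp(w_{n},w_{n},w_{n})$; axiom (2) gives $\mu_{n}\leq\lambda_{n}$.

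First I would apply (3.1) with $a=b=w_{n-1}$, $c=w_{n}$; after using the symmetry $\wp(p,p,q)=\wp(q,q,p)$ this reduces to
\[
\lambda_{n}\leq\tau\!\Bigl(\lambda_{n-1}^{\,p+q+r}\,\lambda_{n}^{\,s}\,\bigl[\tfrac{1}{2t}(\lambda_{n-1}+\mu_{n})\bigr]^{1-p-q-r-s}\Bigr).
\]
A parallel application at $a=b=c=w_{n-1}$ shows $\mu_{n}\leq\tau(\lambda_{n-1})<\lambda_{n-1}$. If one assumed $\lambda_{n}\geq\lambda_{n-1}$, then together with $\mu_{n}\leq\lambda_{n}$ the bracket would be bounded by $\lambda_{n}$, so monotonicity of $\tau$ produces $\lambda_{n}\leq\tau(\lambda_{n})<\lambda_{n}$, a contradiction; thus $\{\lambda_{n}\}$ strictly decreases to some $L\geq 0$. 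With $\mu_{n}\leq\lambda_{n-1}$ in hand, the argument of $\tau$ above is bounded by $\lambda_{n-1}^{\,1-s}\lambda_{n}^{\,s}\,t^{-(1-p-q-r-s)}$, whose limit $L\,t^{-(1-p-q-r-s)}\leq L$ combined with upper semi-continuity of $\tau$ gives $L\leq\tau(L)$, forcing $L=0$. Consequently $\mu_{n}\to 0$ as well.

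The main obstacle is Cauchy-ness, since a Boyd--Wong $\tau$ does not furnish a geometric decay rate. I would argue by contradiction: if $\{w_{n}\}$ fails to be Cauchy, choose $\varepsilon>0$ and indices $m_{k}<n_{k}$, with $n_{k}$ minimal, so that $\wp(w_{m_{k}},w_{m_{k}},w_{n_{k}})\geq\varepsilon$. Iterated use of axiom (4) combined with $\lambda_{n},\mu_{n}\to 0$ pins $\wp(w_{m_{k}},w_{m_{k}},w_{n_{k}})$, $\wp(w_{m_{k}-1},w_{m_{k}-1},w_{n_{k}-1})$ and the cross-terms appearing in the contraction all to the common limit $\varepsilon$; inserting $a=b=w_{m_{k}-1}$, $c=w_{n_{k}-1}$ into (3.1) and letting $k\to\infty$ via upper semi-continuity delivers $\varepsilon\leq\tau(\varepsilon)<\varepsilon$, the desired contradiction. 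Completeness then yields $a\in W$ with $\lim_{n}\wp(w_{n},w_{n},a)=\lim_{n,m}\wp(w_{n},w_{n},w_{m})=\wp(a,a,a)$; the second limit is $0$ by Cauchy-ness, so $\wp(a,a,a)=0$.

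To finish, assume $S(a)\neq a$ and plug $a_{\text{new}}=b_{\text{new}}=w_{n}$, $c_{\text{new}}=a$ into (3.1): every factor of the argument of $\tau$ carries one of $\lambda_{n}$, $\wp(w_{n},w_{n},a)$, $\wp(w_{n+1},w_{n+1},a)$ with positive exponent, each of which vanishes in the limit, so $\wp(w_{n+1},w_{n+1},S(a))\to 0$, and axiom (4) then gives $\wp(a,a,S(a))=0=\wp(S(a),S(a),S(a))$; axiom (1) forces $S(a)=a$. For uniqueness, given fixed points $a\neq b$ we already know $\wp(a,a,a)=\wp(b,b,b)=0$, and an application of (3.1) at $(a,a,b)$ collapses to $\wp(a,a,b)\leq\tau(0)=0$ because the zero diagonal terms appear with positive exponents $q,r,s$, whence axiom (1) forces $a=b$. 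This last step, nominally sitting outside the domain $W\setminus\mathrm{Fix}(S)$ of (3.1), is the second subtlety I expect to have to handle carefully, most cleanly by a limiting argument that approximates $a,b$ via non-fixed Picard orbits.
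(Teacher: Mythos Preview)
Your outline tracks the paper's proof closely (Picard iteration, monotone decrease of $\lambda_n$, limit $0$, Cauchy by contradiction, completeness, fixed-ness, uniqueness), but the Cauchy step is misdescribed. When you insert $a=b=w_{m_k-1}$, $c=w_{n_k-1}$ into (3.1), the argument of $\tau$ contains the factors $[\lambda_{m_k-1}]^{q+r}[\lambda_{n_k-1}]^{s}$, each of which tends to $0$; since the remaining factors are merely bounded (by quantities of order $t\varepsilon$), the entire argument of $\tau$ tends to $0$, not to $\varepsilon$. The contradiction you actually obtain is therefore $\varepsilon\le\tau(0)=0$, not $\varepsilon\le\tau(\varepsilon)$. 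This is precisely how the paper argues, and it is in fact simpler: one never needs to pin the cross-distances $\wp(w_{m_k-1},w_{m_k-1},w_{n_k-1})$ and $\wp(w_{m_k},w_{m_k},w_{n_k-1})$ to the sharp value $\varepsilon$ (a step that is genuinely delicate when the coefficient $t>1$), one only needs them bounded so that the vanishing $\lambda$-factors drag the product to $0$.

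On uniqueness you are right that applying (3.1) to a pair of fixed points sits outside the stated domain $W\setminus\mathrm{Fix}(S)$; the paper simply applies it there without comment, so you are being more scrupulous than the source. Your suggested limiting repair is not obviously workable, however: two distinct fixed points would require two separate approximating orbits, while (3.1) accepts only a single triple at a time. This domain issue is a known subtlety in the interpolative-contraction literature, and the cleanest resolutions are either to assume (3.1) for all $a,b,c\in W$ or to drop the uniqueness assertion.
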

\begin{proof}
    Let $a_{0} \in W.$ Define $S^{k}(a_{0})=a_{k}$ for each $k \in \mathbb{N}$. If $a_{k}=a_{k+1}$ for some $k \in \mathbb{N}$, then $a_{k}$ becomes fixed point of $S.$ Thus, without loss of generality, suppose $a_{k}\neq a_{k+1}$ for all $k \geq 0.$ For $a=b=a_{k}$ and $c=a_{k+1}$ in $(3.1)$, we get
    \begin{align}
      \wp (a_{k}, a_{k}, a_{k+1})= \wp (S(a_{k-1}), S(a_{k-1}), S(a_{k})) &\leq \tau ([\wp(a_{k-1},a_{k-1},a_{k})]^{p}[\wp(a_{k-1},a_{k-1},S(a_{k-1}))]^{q}[\wp(a_{k-1},a_{k-1},S(a_{k-1}))]^{r}\notag\\
[\wp(a_{k},a_{k},S(a_{k}))]^{s}&[\frac{1}{2t} (\wp(S(a_{k-1}),S(a_{k-1}), a_{k-1})+\wp(S(a_{k-1}),S(a_{k-1}), a_{k})]^{1-p-q-r-s})\notag\\
&\leq \tau ([\wp(a_{k-1},a_{k-1},a_{k})]^{p}[\wp(a_{k-1},a_{k-1},a_{k})]^{q}[\wp(a_{k-1},a_{k-1},a_{k})]^{r}\notag \\
&[\wp(a_{k},a_{k},a_{k+1})]^{s}[\frac{1}{2t} (\wp(a_{k},a_{k}, a_{k-1})+\wp(a_{k},a_{k}, a_{k})]^{1-p-q-r-s}).\end{align} Using $\wp(a_{k},a_{k},a_{k})\leq \wp(a_{k},a_{k},a_{k+1})$, $\wp(a_{k},a_{k},a_{k+1}) = \wp(a_{k+1},a_{k+1},a_{k})$ and the property of $\mathcal{J}$, we have
\begin{align}
 \wp (a_{k}, a_{k}, a_{k+1})< [\wp(a_{k-1},a_{k-1},a_{k})]^{p}[\wp(a_{k-1},a_{k-1},a_{k}&)]^{q}[\wp(a_{k-1},a_{k-1},a_{k})]^{r}
[\wp(a_{k},a_{k},a_{k+1})]^{s}\notag \\&~\hspace{1.5cm}[\frac{1}{2t} (\wp(a_{k},a_{k}, a_{k-1})+\wp(a_{k},a_{k}, a_{k})]^{1-p-q-r-s}\\
\leq  [\wp(a_{k-1},a_{k-1},a_{k})]^{p}[\wp(a_{k-1},a_{k-1},a_{k})&]^{q}[\wp(a_{k-1},a_{k-1},a_{k})]^{r}[\wp(a_{k},a_{k},a_{k+1})]^{s}\notag\\
&[\frac{1}{2} (\wp(a_{k-1},a_{k-1}, a_{k})+\wp(a_{k},a_{k}, a_{k+1})]^{1-p-q-r-s},
     \end{align}
   Suppose that  $\wp(a_{k},a_{k}, a_{k+1}) >\wp(a_{k-1},a_{k-1},a_{k})$ for some $k\in \mathbb{N}$ then $\frac{1}{2} (\wp(a_{k-1},a_{k-1}, a_{k})+\wp(a_{k},a_{k}, a_{k+1})) \leq \wp(a_{k},a_{k}, a_{k+1})$. Then by inequality $(3.4)$, we have
   $ \wp (a_{k}, a_{k}, a_{k+1}) <  \wp (a_{k}, a_{k}, a_{k+1}) $ which is absurd. Thus, we have $\wp(a_{k},a_{k}, a_{k+1}) \leq \wp(a_{k-1},a_{k-1},a_{k})$ for all $k \in \mathbb{N}.$ Hence, $\{ \wp(a_{k},a_{k}, a_{k+1})\}$ is a monotonically increasing sequence of positive real numbers. So, it converges to a limit v (say).
   \begin{equation}
   \mbox{Claim }1. ~v=0.\hspace{14.5cm} 
   \end{equation}
   Suppose $v \neq 0.$ Note that $\wp(a_{k},a_{k},a_{k})\leq \wp(a_{k},a_{k},a_{k+1})$. Hence, $\limsup_{k \rightarrow \infty }{\wp(a_{k},a_{k},a_{k})} \leq \limsup_{k \rightarrow \infty } {\wp(a_{k},a_{k},a_{k+1})}$. So, either $\limsup_{k \rightarrow \infty }{\wp(a_{k},a_{k},a_{k})} = v $ or  $\limsup_{k \rightarrow \infty }{\wp(a_{k},a_{k},a_{k})} =d < v$. Now we have two cases:\\
   Case $1$. If $\limsup_{k \rightarrow \infty }{\wp(a_{k},a_{k},a_{k})} = v $ holds then we know that $\tau$ is upper semi-continuous and from inequality $(3.2),$  
    $v = \lim_{k \rightarrow \infty } {\wp(a_{k},a_{k},a_{k+1})}   \leq \limsup_{k \rightarrow \infty }  \tau ([\wp(a_{k-1},a_{k-1},a_{k})]^{p}[\wp(a_{k-1},a_{k-1},a_{k})]^{q}[\wp(a_{k-1},a_{k-1},a_{k})]^{r}[\wp(a_{k},a_{k},a_{k+1})]^{s}$ \\
$[\frac{1}{2t} (\wp(a_{k},a_{k}, a_{k-1})+\wp(a_{k},a_{k}, a_{k})]^{1-p-q-r-s}) \leq \tau (v) < v $ which is absurd. So $v=0$.\\
Case $2$. If  $\limsup_{k \rightarrow \infty }{\wp(a_{k},a_{k},a_{k})} =d < v$ holds then from inequality $(3.3)$ we get 
 $v = \lim_{k \rightarrow \infty } {\wp (a_{k}, a_{k}, a_{k+1})}\leq  \limsup_{k \rightarrow \infty } ([\wp(a_{k-1},a_{k-1},a_{k})]^{p}[\wp(a_{k-1},a_{k-1},a_{k})]^{q}[\wp(a_{k-1},a_{k-1},a_{k})]^{r}
[\wp(a_{k},a_{k},a_{k+1})]^{s}[\frac{1}{2t} (\wp(a_{k},a_{k}, a_{k-1})+\wp(a_{k},a_{k},$\\$ a_{k})]^{1-p-q-r-s}.)\leq v^{p}v^{q}v^{r}v^{s}[\frac{1}{2t}(v+d)]^{1-p-q-r-s}< v.$ which is absurd. Hence, $v=0.$\\
Claim $2$. 
$\{a_{k}\}$ is a cauchy sequence.\\
Suppose 
$\{a_{k}\}$ is not cauchy sequence. So there is $ \delta > 0$ with subsequences of natural numbers
$\{k_{n}\}, 
\{l_{n}\}$ such that \begin{equation}
k_{n}> l_{n} \geq n 
\mbox{ with }\wp(a_{k_{n}},a_{k_{n}}, a_{l_{n}}) \geq \delta \mbox{ and } \wp(a_{l_{n}},a_{l_{n}}, a_{k_{n-1}})< \delta \hspace{7.15cm}
\end{equation}
For $a=a_{l_{n}}, b=a_{l_{n}}, c=a_{k_{n}}$ in inequality $(3.1)$, using the property of $\tau$ and inequalities $(3.5)$ and $(3.6)$,  we have \begin{align}
   \wp (a_{l_{n}},a_{l_{n}},a_{k_{n}}) = &\wp(S(a_{l_{n}-1}), S(a_{l_{n}-1}), S(a_{k_{n}-1})) \leq \tau ([\wp(a_{l_{n}-1},a_{l_{n}-1},a_{k_{n}-1})]^{p}[\wp(a_{l_{n}-1},a_{l_{n}-1},S(a_{l_{n}-1})]^{q}\notag\\&[\wp(a_{l_{n}-1},a_{l_{n}-1},S(a_{l_{n}-1}))]^{r}[\wp(a_{k_{n}-1},a_{k_{n}-1},S(a_{k_{n}-1}))]^{s}\notag\\&[\frac{1}{2t} (\wp(S(a_{l_{n}-1}),S(a_{l_{n}-1}), a_{l_{n}-1})+\wp(S(a_{l_{n}-1}),S(a_{l_{n}-1}),a_{k_{n}-1} )]^{1-p-q-r-s})\notag \\\mbox{Also, }\delta \leq \wp (a_{l_{n}},a_{l_{n}},&a_{k_{n}}) < [\wp(a_{l_{n}-1},a_{l_{n}-1},a_{k_{n}-1})]^{p}[\wp(a_{l_{n}-1},a_{l_{n}-1},a_{l_{n}}]^{q}[\wp(a_{l_{n}-1},a_{l_{n}-1},a_{l_{n}})]^{r}[\wp(a_{k_{n}-1},a_{k_{n}-1},a_{k_{n}}]^{s}\notag&\hspace{4cm}[\frac{1}{2t} (\wp(a_{l_{n}},a_{l_{n}}, a_{l_{n}-1})+\wp(a_{l_{n}},a_{l_{n}},a_{k_{n}-1} )]^{1-p-q-r-s}\notag\\
   \delta \leq \wp (a_{l_{n}},a_{l_{n}},a_{k_{n}}&) \leq [t(\wp(a_{l_{n}-1},a_{l_{n}-1},a_{l_{n}}) +\wp(a_{l_{n}-1},a_{l_{n}-1},a_{l_{n}}) +\wp(a_{k_{n}-1},a_{k_{n}-1},a_{l_{n}}))-\wp(a_{l_{n}},a_{l_{n}},a_{l_{n}})]^{p}\notag\\&[\wp(a_{l_{n}-1},a_{l_{n}-1},a_{l_{n}}]^{q}[\wp(a_{l_{n}-1},a_{l_{n}-1},a_{l_{n}})]^{r}[\wp(a_{k_{n}-1},a_{k_{n}-1},a_{k_{n}}]^{s}[\frac{1}{2t} (\wp(a_{l_{n}},a_{l_{n}}, a_{l_{n}-1})+ \delta )]^{1-p-q-r-s }\notag\\
    \delta \leq \wp (a_{l_{n}},a_{l_{n}},a_{k_{n}}&) \leq [t(\wp(a_{l_{n}-1},a_{l_{n}-1},a_{l_{n}}) +\wp(a_{l_{n}-1},a_{l_{n}-1},a_{l_{n}})+ \delta) -\wp(a_{l_{n}},a_{l_{n}},a_{l_{n}})]^{p}[\wp(a_{l_{n}-1},a_{l_{n}-1},a_{l_{n}}]^{q}\notag\\&~\hspace{1cm}[\wp(a_{l_{n}-1},a_{l_{n}-1},a_{l_{n}})]^{r}[\wp(a_{k_{n}-1},a_{k_{n}-1},a_{k_{n}}]^{s}[\frac{1}{2t} (\wp(a_{l_{n}},a_{l_{n}}, a_{l_{n}-1})+ \delta )]^{1-p-q-r-s }.\notag
   \end{align}
   Taking $\limsup_{n\rightarrow \infty} $ and using inequality $(3.5)$, we get $\delta=0$ which is absurd as $\delta > 0.$ Thus, $\{a_{k}\}$ is a cauchy sequence.  \begin{equation} \mbox{Since } W \mbox{ is complete, therefore, there is } a  \mbox{ such that } \wp(a,a,a)=\lim_{k\rightarrow \infty}\wp(a_{k},a_{k},a)=0.\hspace{3.8cm}\end{equation}
   For $a=a_{k},b=a_{k},c=a$ in $(3.1)$ and the property of $\mathcal{J}$ we get \\
$\wp(a_{k+1},a_{k+1} ,a)  = \wp(S(a_{k}), S(a_{k}), S(a))  \leq \mathcal{J}([\wp(a_{k},a_{k},a)]^{p}[\wp(a_{k},a_{k},S(a_{k})]^{q}[\wp(a_{k},a_{k},S(a_{k})]^{r}[\wp(a,a,S(a))]^{s}$\\$~\hspace{7cm}[\frac{1}{2t} (\wp(S(a_{k}),S(a_{k}), a_{k})+\wp(S(a_{k}),S(a_{k}), a)]^{1-p-q-r-s})$\\
$\wp(a_{k+1},a_{k+1} ,a)  \leq  [\wp(a_{k},a_{k},a)]^{p}[\wp(a_{k},a_{k},a_{k+1}]^{q}[\wp(a_{k},a_{k},a_{k+1}]^{r}[\wp(a,a,S(a))]^{s}$\\$~\hspace{7cm}[\frac{1}{2t} (\wp(a_{k+1},a_{k+1}, a_{k+1})+\wp(a_{k+1},a_{k+1}, a)]^{1-p-q-r-s}$\\
Taking $\limsup_{k\rightarrow \infty} $, using upper semicontinuity of $\tau$ and using inequality $(3.5)$ and $(3.7)$, we get \\
$\limsup_{k\rightarrow \infty} \wp(a_{k+1},a_{k+1} ,S (a))  \leq \tau(0) =0 $. Hence, $\limsup_{k\rightarrow \infty} \wp(a_{k+1},a_{k+1} ,S( a))=0.$ As $\wp(S (a), S (a) , S (a)) \leq \wp(W (a), S (a) ,a_{k+1})  =\wp(a_{k+1},a_{k+1} ,S (a)).$ So, $\wp(S(a), S(a) , S(a)) \leq \limsup_{k\rightarrow \infty} \wp(a_{k+1},a_{k+1} ,S (a)) =0$. Thus, $\wp(S (a), S (a) , S (a))=0.$ 
Also, $\wp(a,a,S (a)) \leq t[\wp (a,a,a_{k}) + \wp (a,a,a_{k}) + \wp(S (a) ,S (a) ,a_{k})]- \wp(a_{k},a_{k}, a_{k}).$ \\
Taking $\limsup_{k\rightarrow \infty}$ and using inequalities $(3.5)$, we get $\wp(a,a,S (a) )=0$. \\
Thus $\wp(a,a,S (a) )=\wp(a,a, a )=\wp(S (a), S (a),S (a) )=0$. Hence, $a=W (a).$\\
Claim $3.~a$ is the unique fixed point of $S$.\\
Suppose $v$ is another fixed point of $S$.\\
Now, $\wp(a, a, v)=\wp(S(a), S(a), S(v))  \leq \tau ([\wp(a,a,v)]^{p}[\wp(a,a,S(a)]^{q}[\wp(a,a, S(a))]^{r}[\wp(v,v,S(v))]^{s}$\\$~\hspace{7cm}[\frac{1}{2t} (\wp(S(a),S(a), a)+\wp(S(a),S(a), v)]^{1-p-q-r-s})$\\
$~\hspace{5.7cm} \leq \tau ([\wp(a,a,v)]^{p}[\wp(a,a,a)]^{q}[\wp(a,a, a)]^{r}[\wp(v,v,v)]^{s}$\\$~\hspace{7cm}[\frac{1}{2t} (\wp(a,a, a)+\wp(a,a, v)]^{1-p-q-r-s})$
\\As $\wp(a,a,a)=0, ~ \wp(a, a, v) \leq \tau (0)=0.$ So, $\wp(a,a,v) =0$.
Since $\wp(v,v,v) \leq \wp(v,v,a) =\wp(a,a,v),~\wp(v,v,v)=0$. Thus 
$\wp(v,v,v) =\wp(a,a,a)=\wp(a,a,v) =0, \mbox{ which implies } a=v.$
\end{proof}
\begin{example}
Let $W= \{0,3\} \cup [4,\infty)$.Then  we define $\wp: W \times W \times W \rightarrow [0,\infty)$ as follows:\begin{center}
$ \wp(a,b,c)=\begin{cases}
    a^{5}, & \text{if $a=b=c$},\\
2(a^{5}+c^{5}), & \text{if $a=b\neq c$},\\
a^{5}+b^{5}+c^{5}, & \text{otherwise}.
  \end{cases} $\end{center}
  Clearly, $(W, \wp)$ is a complete partial  $S_{b}$-metric space with coefficient $t=1$.\begin{center}$\tau(a)=\begin{cases}
    \frac{9a}{10}, & \text{if $a\in [0,1]$},\\
    \frac{a}{2}, & \text{if $a>1$}.
  \end{cases}$\end{center} Clearly, $\tau  \in \Re$.
  Also, we define $S : W \rightarrow W$ as follows: \begin{center} $S (a)=\begin{cases}
    0, & \text{if $a\in \{0,3\}$},\\
    3, & \text{otherwise}.
  \end{cases}$\end{center}
  Clearly, $S$ is an interpolative Boyd-Wong type contraction by Example $3.2$.
  Note that $0$ is the unique fixed point of $S$.
\end{example}

\begin{definition}
  Let $(W, \wp )$ be a partial  $S_{b}$-metric space with coefficient $t \geq 1$. Then the operator $S$ from $W$ to $W$ is called an interpolative Matkowski type contraction in    partial  $S_{b}$-metric space, if there are $p,q,r,s \in (0,1)$ such that $p+q+r+s < 1$ and a monotonically increasing function $\mathcal{J} \in \mathcal{G}$ with 
  \begin{align}
  \wp(S(a), S(b), S(c))  \leq \mathcal{J} ([\wp(a,b,c)]^{p}[\wp(a,a,S(a))]^{q}[\wp(b,b,& S(b))]^{r}[\wp(c,c,S(c))]^{s}\notag\\&[\frac{1}{2t} (\wp(S(a),S(a), b)+\wp(S(b),S(b), c)]^{1-p-q-r-s})
  \end{align}
  for all $a,b,c \in W \backslash Fix(S).$
\end{definition}
\begin{example}
Let $W= \{0,3\} \cup [4,\infty)$.Then  we define $\wp: W \times W \times W \rightarrow [0,\infty)$ as follows : \begin{center}
$ \wp(a,b,c)=\begin{cases}
    a^{5}, & \text{if $a=b=c$},\\
2(a^{5}+c^{5}), & \text{if $a=b\neq c$},\\
a^{5}+b^{5}+c^{5}, & \text{otherwise}.
  \end{cases} $\end{center}
  Clearly, $(W, \wp)$ is a  partial  $S_{b}$-metric space with coefficient $t=1$. \\Now, define $\mathcal{J}: [0,\infty) \rightarrow [0,\infty)
  $ as $\mathcal{J}(a) = \frac{a}{2}.$ Clearly, $\mathcal{J} \in \mathcal{G}
  .$  Also, define $S : W \rightarrow W$ as follows:\begin{center}  $S (a)=\begin{cases}
    0, & \text{if $a\in \{0,3\}$},\\
    3, & \text{otherwise}.
  \end{cases}$\end{center}
  Clearly, $S$ is a interpolative Matkowski type contraction proved similarly as in  Example $3.2$ with $\tau$ replaced by $\mathcal{J}$.\end{example}
  \begin{theorem}
  Let $(W, \wp )$ be a  complete  partial  $S_{b}$-metric space with coefficient $t \geq 1$ and   $S$ be an interpolative Matkowski type contraction. Then $S$ has a unique fixed point $a$ (say) in $W$ and $\wp(a,a,a) = 0.$ 
\end{theorem}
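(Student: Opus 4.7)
The plan is to mirror the proof of Theorem 3.3, replacing the upper semi-continuity of $\tau$ by the two hypotheses on $\mathcal{J}$: monotonicity and $\lim_{k\to\infty} \mathcal{J}^{k}(v) = 0$. Fix $a_{0} \in W$ and set $a_{k} = S^{k}(a_{0})$. If $a_{k} = a_{k+1}$ for some $k$ we are done, so assume $a_{k} \neq a_{k+1}$ for all $k$, which puts every $a_{k-1}, a_{k-1}, a_{k}$ in $W \setminus \mathrm{Fix}(S)$ and permits the use of (3.8).

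\textbf{Step 1 (decay of successive distances).} Substitute $a = b = a_{k-1}$, $c = a_{k}$ in (3.8) and simplify using $\wp(a_{k},a_{k},a_{k}) \le \wp(a_{k},a_{k},a_{k+1})$ and the symmetry axiom, as in the derivation of inequality (3.3)--(3.4). The argument of $\mathcal{J}$ then becomes a product of powers of $\wp(a_{k-1},a_{k-1},a_{k})$ and $\wp(a_{k},a_{k},a_{k+1})$, together with the average $\tfrac{1}{2}(\wp(a_{k-1},a_{k-1},a_{k}) + \wp(a_{k},a_{k},a_{k+1}))$. Suppose $\wp(a_{k},a_{k},a_{k+1}) > \wp(a_{k-1},a_{k-1},a_{k})$; then the average is dominated by $\wp(a_{k},a_{k},a_{k+1})$, so the product is at most $\wp(a_{k},a_{k},a_{k+1})$, and Matkowski's lemma gives $\wp(a_{k},a_{k},a_{k+1}) \le \mathcal{J}(\wp(a_{k},a_{k},a_{k+1})) < \wp(a_{k},a_{k},a_{k+1})$, a contradiction. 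Hence $\{\wp(a_{k},a_{k},a_{k+1})\}$ is non-increasing, the bracketed expression is bounded by $\wp(a_{k-1},a_{k-1},a_{k})$, and monotonicity of $\mathcal{J}$ yields $\wp(a_{k},a_{k},a_{k+1}) \le \mathcal{J}(\wp(a_{k-1},a_{k-1},a_{k})) \le \cdots \le \mathcal{J}^{k}(\wp(a_{0},a_{0},a_{1}))$, which tends to $0$ by the defining property of $\mathcal{G}$.

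\textbf{Step 2 (Cauchy).} Assume $\{a_{k}\}$ is not Cauchy and extract subsequences $\{k_{n}\}, \{l_{n}\}$ satisfying the analogue of (3.6). Putting $a = b = a_{l_{n}-1}$, $c = a_{k_{n}-1}$ in (3.8), apply Matkowski's lemma to drop $\mathcal{J}$ and then use the partial $S_{b}$ inequality to bound $\wp(a_{l_{n}-1},a_{l_{n}-1},a_{k_{n}-1})$ via $a_{l_{n}}$, exactly as in the chain of inequalities following (3.6) in the Boyd--Wong proof. Taking $\limsup_{n\to\infty}$, the factors carrying the exponents $q, r, s$ are successive-iterate distances that vanish by Step 1, so the right-hand side tends to $0$, contradicting $\delta \le \wp(a_{l_{n}},a_{l_{n}},a_{k_{n}})$. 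Completeness then supplies $a \in W$ with $\wp(a,a,a) = \lim_{k\to\infty} \wp(a_{k},a_{k},a) = 0$.

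\textbf{Step 3 (fixed point and uniqueness).} For $a = b = a_{k}$, $c = a$ in (3.8), use Matkowski's lemma to discard $\mathcal{J}$; the factors with exponents $p$ and the bracketed average tend to $0$ because $\wp(a_{k},a_{k},a) \to 0$ and $\wp(a_{k+1},a_{k+1},a_{k+1}) \le \wp(a_{k},a_{k},a_{k+1}) \to 0$, while $[\wp(a,a,S(a))]^{s}$ is raised to $s < 1$. This forces $\wp(a_{k+1},a_{k+1},S(a)) \to 0$, hence $\wp(S(a),S(a),S(a)) = 0$ by axiom (2) and $\wp(a,a,S(a)) = 0$ via the partial $S_{b}$ triangle inequality through $a_{k}$, so $a = S(a)$. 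For uniqueness, a second fixed point $v$ plugged into (3.8) with $a = b = a$, $c = v$ gives $\wp(a,a,v) \le \mathcal{J}(0) = 0$ (since $\wp(a,a,a) = \wp(v,v,v) = 0$), and axiom (1) closes the argument.

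\textbf{Main obstacle.} The principal difficulty is that $\mathcal{J}$ is not assumed upper semi-continuous, so the $\limsup$ manoeuvre used for Claim $1$ of Theorem 3.3 is unavailable. The remedy is to iterate $\mathcal{J}$ via its monotonicity to exploit $\mathcal{J}^{k}(v) \to 0$ in Step 1, and, in every subsequent limiting argument, to first strip $\mathcal{J}$ through the Matkowski lemma before passing to the limit; the partial $S_{b}$ coefficient $t$ and the corrective term $-\wp(s,s,s)$ in the triangle inequality must also be tracked carefully, but cause no essential new difficulty once successive-distance decay is in hand.
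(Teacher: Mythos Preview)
Your proposal is correct and matches the paper's own proof essentially line for line: the paper carries out exactly your Step~1 (monotonicity, then iteration $\wp(a_{k},a_{k},a_{k+1})\le\mathcal{J}^{k}(\wp(a_{0},a_{0},a_{1}))\to 0$ in place of the upper semi-continuity argument) and then simply writes ``The remaining proof is similar to that of Theorem~3.3,'' which is precisely what you spell out in Steps~2 and~3. Your identification of the main obstacle and its resolution via the Matkowski iteration is exactly the paper's device.
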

\begin{proof}
 Let $a_{0} \in W.$ Define $S^{k}(a_{0})=a_{k}$ for each $k \in \mathbb{N}$. If $a_{k}=a_{k+1}$ for some $k \in \mathbb{N}$, then $a_{k}$ becomes fixed point of $S.$ Thus, without loss of generality, suppose $a_{k}\neq a_{k+1}$ for all $k \geq 0.$ For $a=b=a_{k}$ and $c=a_{k+1}$ in $(3.8)$, we get
    \begin{align}
      \wp (a_{k}, a_{k}, a_{k+1})= \wp (S(a_{k-1}), S(a_{k-1}), S(a_{k})) &\leq \mathcal{J} ([\wp(a_{k-1},a_{k-1},a_{k})]^{p}[\wp(a_{k-1},a_{k-1},S(a_{k-1}))]^{q}[\wp(a_{k-1},a_{k-1},S(a_{k-1}))]^{r}\notag\\
[\wp(a_{k},a_{k},S(a_{k}))]^{s}&[\frac{1}{2t} (\wp(S(a_{k-1}),S(a_{k-1}), a_{k-1})+\wp(S(a_{k-1}),S(a_{k-1}), a_{k})]^{1-p-q-r-s})\notag\\
&\leq \mathcal{J} ([\wp(a_{k-1},a_{k-1},a_{k})]^{p}[\wp(a_{k-1},a_{k-1},a_{k})]^{q}[\wp(a_{k-1},a_{k-1},a_{k})]^{r}\notag \\
&[\wp(a_{k},a_{k},a_{k+1})]^{s}[\frac{1}{2t} (\wp(a_{k},a_{k}, a_{k-1})+\wp(a_{k},a_{k}, a_{k})]^{1-p-q-r-s}).\end{align} By $\wp(a_{k},a_{k},a_{k})\leq \wp(a_{k},a_{k},a_{k+1})$, $\wp(a_{k},a_{k},a_{k+1}) = \wp(a_{k+1},a_{k+1},a_{k})$ and the property of $\mathcal{J}$ in $(3.9)$ we get
\begin{align}
 \wp (a_{k}, a_{k}, a_{k+1})\leq \mathcal{J} ( [\wp(a_{k-1},a_{k-1},a_{k})]^{p}[\wp(a_{k-1},a_{k-1},&a_{k})]^{q}[\wp(a_{k-1},a_{k-1},a_{k})]^{r}
[\wp(a_{k},a_{k},a_{k+1})]^{s}\notag \\&[\frac{1}{2t} (\wp(a_{k-1},a_{k-1}, a_{k})+\wp(a_{k+1},a_{k+1}, a_{k})]^{1-p-q-r-s}) \end{align}
Suppose that  $\wp(a_{k},a_{k}, a_{k+1}) >\wp(a_{k-1},a_{k-1},a_{k})$ for some $k\in \mathbb{N}$ then $\frac{1}{2} (\wp(a_{k-1},a_{k-1}, a_{k})+\wp(a_{k},a_{k}, a_{k+1})) \leq \wp(a_{k},a_{k}, a_{k+1})$. Then by inequality $(3.10)$ and using $\mathcal{J}(a)<a $, we have
   $ \wp (a_{k}, a_{k}, a_{k+1}) <  \wp (a_{k}, a_{k}, a_{k+1}) $ which is absurd. Thus, we have $\wp(a_{k},a_{k}, a_{k+1}) \leq \wp(a_{k-1},a_{k-1},a_{k})$ for all $k \in \mathbb{N}.$ Hence, $\{ \wp(a_{k},a_{k}, a_{k+1})\}$ is a monotonically increasing sequence of positive real numbers. So, it converges to a limit v (say). Note that by inequality $(3.10)$ and using $\wp(a_{k},a_{k}, a_{k+1}) \leq \wp(a_{k-1},a_{k-1},a_{k})$ we get $\wp(a_{k},a_{k}, a_{k+1}) \leq \mathcal{J}(\wp(a_{k-1},a_{k-1},a_{k}))$. On recurring the procedure, we get $\wp(a_{k},a_{k}, a_{k+1}) \leq \mathcal{J}^{k}(\wp(a_{0},a_{0},a_{1}))$. Taking $\lim_{k\rightarrow \infty}$  and using the property of $\mathcal{J}$, we get $v=\lim_{k\rightarrow \infty}\wp(a_{k},a_{k}, a_{k+1}) \leq \lim_{k\rightarrow \infty}\mathcal{J}^{k}(\wp(a_{0},a_{0},a_{1}))=0$.\\
The remaining proof is similar to that of the  Theorem $3.3$.
\end{proof}
\begin{example}
Let $W= \{0,3\} \cup [4,\infty)$. Then,  we define $\wp: W \times W \times W \rightarrow [0,\infty)$ as \begin{center}
$ \wp(a,b,c)=\begin{cases}
    a^{5}, & \text{if $a=b=c$},\\
2(a^{5}+c^{5}), & \text{if $a=b\neq c$},\\
a^{5}+b^{5}+c^{5}, & \text{otherwise}.
  \end{cases} $\end{center}
  Clearly, $(W, \wp)$ is a complete partial  $S_{b}$-metric space with coefficient $t=1$. Now, define $\mathcal{J}: [0,\infty) \rightarrow [0,\infty)
  $ as $\mathcal{J}(a) = \frac{a}{2}.$ Clearly, $\mathcal{J} \in \mathcal{G}
  .$ Also, define $S : W \rightarrow W $ as   \begin{center}$S (a)=\begin{cases}
    0, & \text{if $a\in \{0,3\}$},\\
    3, & \text{otherwise}.
  \end{cases}$\end{center}
  Clearly, $S$ is an interpolative Matkowski type contraction proved similarly as in Example $3.2$ with $\tau$ replaced by $\mathcal{J}$. Note that $0$ is the unique fixed point of $W$.\end{example}
\section*{Conclusion}
We have introduced the topological structure on  partial  $S_{b}-$metric space. Further,
we have noticed that a 
  partial  $S_{b}-$metric topological space is a $T_{0}$ space but need not be connected, $T_{1}$
 space, $T_{2}$ space and compact space. Now we can discuss the topological properties like convergence, continuity  and many more such. Also, we have given the definition of interpolative Boyd-Wong type contraction and interpolative Matkowski type contaction in the setting of partial  $S_{b}-$metric space and proved some related fixed point theorems on them.


\begin{thebibliography}{99}
\bibitem{bm1}Aydi, H., Karapinar, E. , Radenovi´c, S.; \textit{On Interpolative Boyd–Wong type  and Matkowski type contractions};  TWMS J. Pure Appl. Math. $\textbf{11}(2), 204-212 (2020).$ 
\bibitem{bm2}Banach S; \textit{Sur les opérations dans les ensembles abstraits et leur applications aux équations intégrales;} Fundam. Math.; $\textbf{3},~  133-181~(1922).$
\bibitem{bm3}Boyd, David and Wong, J.; \textit{On Nonlinear Contractions}; Proc.  Amer. Math. Soc. ; $\textbf{20},~ 458-458~ (1969).$ 
\bibitem{bm4} Dhage, B.C.; \textit{On generalized metric spaces and topological structure I}; An. Stiint. Univ. Al. I. Cuza Iasi. Mat. (N.S.)  ;$~\textbf{46}(1)(2000),~37-42~(2000).$
\bibitem{bm5}Kannan, R.; \textit{Some results on fixed points;} Bull. Cal. Math. Soc.; $\textbf{60},~71–76(1968)$
\bibitem{bm6} Kannan, R.: \textit{Some results on fixed points-II.} The Amer. Math. Monthly $\textbf{76}(4),~ 405–408 (1969).$
\bibitem{bm7}Karapınar, E.; \textit{A survey on interpolative and hybrid contractions;} In Mathematical Analysis in Interdisciplinary Research, $431-475$, Springer.
\bibitem{bm8}Karapinar, E.; \textit{Revisiting the Kannan type contractions via interpolation;} Adv. Theory Nonlinear Anal. Appl.; $\textbf{2}(2),~ 85–87 (2018).$
\bibitem{bm9}Matkowski, J.;  \textit{Integrable solutions of functional equations}; Diss. Math.; $\textbf{127},~1-68(1975).$
\bibitem{bm10}Matkowski, J.; \textit{Fixed point theorems for mappings with a contractive iterate at a point}; Proc.  Amer. Math. Soc.; $\textbf{62},~344-348(1977).$
\bibitem{bm11}Nabil, M.; \textit{A contraction principle in partial S-metric spaces;}Univers. J. Math. Appl.;$\textbf{5},~ 109-119~(2014).$ 
\bibitem{bm12}Shukla, S.; \textit{Partial b-metric spaces and fixed point theorems;} MedJM;
 $ \textbf{11}(2),~703–711~(2014).$
 \bibitem{bm13} Souayah, N.; \textit{A fixed point in partial $S_{b}$-metric spaces;} An. St. Univ. Ovidius Constanta, Ser. Mat.; $\textbf{24}(3),~351-362~(2016).$
\bibitem{bm14} 
 Sedghi, S., Shobe, N. and Aliouche, A.; \textit{A generalization of fixed point theorems in $ S $-metric spaces};  Mat. Vesn.;  $\textbf{64}(249),~ 258-266~(2012).$
 \bibitem{bm15} Souayah, N. and Mlaiki, N.; \textit{A fixed point theorem in $S_b$-metric spaces;} J. Math. Comp. Sci.;$~\textbf{16}(315),~131-139~(2013).$
 \bibitem{bm16} Sedghi, S. and Van Dung, N.; \textit{Fixed point theorems on S-metric spaces;}  Mat. Vesn.; $\textbf{255}, ~113-124 ~(2014).$
\bibitem{bm17} Z. Mustafa, J. R. Roshan, V. Parvaneh, Z. Kadelburg; \textit{Some common fixed point results in
ordered partial b-metric spaces;} J. Inequal. Appl.; $\textbf{2013}, ~1-26~(2013).$
\end{thebibliography}
\end{document}